\documentclass[11pt,reqno]{amsart}

\usepackage[margin=1.15in]{geometry}
\usepackage{amsmath,amssymb,amsthm}
\usepackage{hyperref}
\hypersetup{colorlinks=true,linkcolor=blue,citecolor=blue,urlcolor=blue}

\theoremstyle{plain}
\newtheorem{theorem}{Theorem}
\newtheorem{lemma}[theorem]{Lemma}
\newtheorem{corollary}[theorem]{Corollary}

\theoremstyle{definition}
\newtheorem{fact}[theorem]{Fact}
\newtheorem{remark}[theorem]{Remark}

\numberwithin{equation}{section}

\DeclareMathOperator{\spn}{span}
\DeclareMathOperator{\conv}{conv}

\newcommand{\R}{\mathbb{R}}

\begin{document}

\title{Three Squares in a Rectangle}

\author{Haobo Yang}
\address{Independent scholar}
\email{hy2899@columbia.edu}

\subjclass[2020]{52C15, 52C10}
\keywords{square packing, square packing in rectangles, rotated squares,
Erd\H{o}s problem 106, enclosing rectangle, semi-perimeter, separating line,
guillotine cut}

\begin{abstract}
For $x\ge1$, let $G_3(x)$ be the maximum sum of the side lengths of three
pairwise interior-disjoint, arbitrarily rotated squares contained in a
$1\times x$ rectangle. We determine this function exactly:
$G_3(x)=x+\tfrac12$ for $1\le x\le\tfrac32$, $G_3(x)=2$ for
$\tfrac32\le x\le2$, $G_3(x)=x$ for $2\le x\le3$, and $G_3(x)=3$ for
$x\ge3$. This completes the $n=3$ case of the rectangular square-packing
question posed by Richard Stanley in a 2021 MathOverflow comment. The values
for $\tfrac32\le x\le3$ follow from a strip theorem stating that three
squares in $[0,1]\times[0,H]$, $H\ge2$, have total side length at most $H$.
For $x\ge3$, the formula is immediate because each square has side length at
most $1$. The range $1\le x\le\tfrac32$ is handled by combining the two-square
theorem with an additional semi-perimeter estimate for a triangle whose two
nonhorizontal sides have opposite slopes. In particular, the special case
$x=2$ answers the question asked in the MathOverflow post. In connection with
Erd\H{o}s problem~\#106, we also prove that every five-square packing in the
unit square with an axis-parallel guillotine cut has total side length at
most $2$.
\end{abstract}

\maketitle
\section{Statement and context}

For a positive integer $n$, let $f(n)$ denote the maximum total side length of $n$
interior-disjoint squares, of arbitrary orientation, packed into a unit square.
Erd\H{o}s conjectured that $f(k^2+1) = k$; this is problem~\#106 on Bloom's index of
Erd\H{o}s problems~\cite{erdos106}. Erd\H{o}s and Soifer~\cite{ES} and, independently, Campbell and Staton~\cite{CS}
established the lower bound $f(k^2+2c+1) \ge k+c/k$ for integers $c$ with
$|c|<k$, and conjectured that equality holds; Praton~\cite{Praton} showed this
generalized conjecture to be equivalent to Erd\H{o}s's original conjecture, and
Singh~\cite{Singh} showed that the original conjecture is equivalent to the
convergence of $\sum_{k\ge1}\bigl(f(k^2+1)-k\bigr)$. Staton and Tyler~\cite{ST}
introduced the axis-parallel variant $g(n)$, for which Baek, Koizumi and
Ueoro~\cite{BKU} proved the conjectured formula. The present note concerns a
rectangular variant in which rotations are unrestricted. In a 2021 comment on a
MathOverflow question, Richard Stanley asked more generally for the maximum sum
of the side lengths of $n$ squares in a $1\times x$ rectangle, for any real
$x>1$~\cite{MO396776}. We determine this quantity exactly for $n=3$ and every
$x\ge1$; see Theorem~\ref{thm:G3}. The main geometric ingredient is the following
strip theorem.

\begin{theorem}[Strip Theorem]\label{thm:strip}
Let $H \ge 2$ and let three squares with pairwise disjoint interiors lie inside the
rectangle $R = [0,1] \times [0,H]$. The squares may be arbitrarily rotated. Then the
sum of their side lengths is at most $H$.
\end{theorem}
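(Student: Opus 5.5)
We prove the Strip Theorem by locating a line that separates the squares and then bounding the height each square requires within the strip. The basic quantity is the vertical extent of a square: a square of side $s$ rotated by angle $\theta\in[0,\pi/2)$ has horizontal width $s(\cos\theta+\sin\theta)$ and the same vertical height. That width must be at most $1$, so $s\le 1$ and $s(\cos\theta+\sin\theta)\le1$; in particular every square has height at least $s$. Label the side lengths $a,b,c$. If the three vertical projections (intervals in $[0,H]$) were pairwise interior-disjoint, their lengths would sum to at most $H$, and since each length is at least the side, we would get $a+b+c\le H$ at once. The difficulty is that squares may overlap in vertical projection by sitting side by side.

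Our first step is to apply the separating-line principle: any two interior-disjoint convex bodies are separated by a line. We choose the configuration so that one square, call it $Q$, is separated from the other two by a line $\ell$. To do this, order the squares by the height of their centres, and observe that some square is the lowest or highest in a suitable sense. For three disjoint convex sets in a strip, one of them can be separated from the union of the other two; we would prove this via a topmost/bottommost argument using the strip's width $1$. This reduces the problem to two pieces: a region on one side of $\ell$ that contains $Q$, and a region on the other side that contains two squares. We then apply the known two-square bound in the resulting sub-region.

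The second step is to bound each piece. If two squares overlap in vertical projection, they sit side by side, so their widths sum to at most $1$ along some horizontal line. This should force $a+b\le 1$ or thereabouts; more precisely, two squares that meet a common horizontal line satisfy $a+b\le$ (vertical extent of their union) in a way we plan to check case by case. The main estimate then takes the following form. Cut $R$ by $\ell$ into two pieces. Suppose $\ell$ is horizontal, or nearly so. Then the top piece has height $h_1$ and the bottom piece has height $h_2$, with $h_1+h_2\le H$ when $\ell$ is horizontal. In that case we need $s\le h$ for a single square and $a+b\le \max(h,1)$ for a pair, and the condition $H\ge2$ absorbs the case where a pair sits side by side inside a region of height less than $1$. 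For a slanted $\ell$ we lose height, and this is where we use the semi-perimeter-type inequality: a square inscribed in a trapezoid cut from the strip by $\ell$ has side at most the trapezoid's mean height, while the slanted part gives extra room for the other side.

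We expect the main obstacle to be the slanted-separator case, together with configurations in which all three squares are pairwise overlapping in vertical projection, such as two small squares side by side next to a tilted one. Our first attempt at the slanted case is an averaging argument. Integrate the horizontal chord lengths, so that the total area of the three squares is at most $H$. Combine this with the bound $s\le1$ and with the fact that any horizontal line meets squares of total width at most $1$. Let $\phi(y)$ be the sum over squares of the chord length at height $y$ divided by that square's width. We have $\int\phi\,dy=\sum s_i$, since each square contributes its own height. It then remains to show $\phi\le1$ almost everywhere. A chord of a tilted square is shorter than its width, so $\phi\le1$ can fail, and we will need to control those failures by the separating line. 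If that route does not close, we fall back on explicit case analysis by rotation angles.
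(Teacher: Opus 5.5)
Your argument works only when the separator is horizontal: there $\min(h_1,1)+\max(h_2,1)\le H$ follows from the two-square theorem and $H\ge2$. The general case, which is the whole content of the theorem, rests on steps that are unproved or false.

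\textbf{(i) One-versus-two separation.} You assert that some square can be separated by a line from the union of the other two, but give no proof. This fails for general pairwise disjoint convex sets (three segments in a pinwheel), so a square-specific argument would be needed, and you do not supply one.

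\textbf{(ii) Slanted separators.} Even granting such a line $\ell$, suppose it meets the walls at heights $m$ and $m+k$. Then the bounding rectangles of its two sides have heights summing to $H+|k|$. So the two-square theorem yields only $H+|k|$, losing exactly the slack you need. Your proposed repair, that a square in a trapezoid cut from the strip has side at most the trapezoid's mean height, is false. The region $\{0\le x\le 1,\ 0\le y\le \tfrac{1}{20}+\tfrac{x}{2}\}$ has mean height $\tfrac{3}{10}$, yet it contains the square $[\tfrac{2}{3},1]\times[0,\tfrac{1}{3}]$.

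\textbf{(iii) Overlapping vertical projections.} Two squares whose vertical projections overlap need not sit side by side. The $45^\circ$ square with vertices $(0,\tfrac{1}{3}),(\tfrac{1}{3},0),(\tfrac{2}{3},\tfrac{1}{3}),(\tfrac{1}{3},\tfrac{2}{3})$ and its translate by $(\tfrac{1}{3},\tfrac{1}{3})$ share an edge and fit in width $1$. They overlap in both projections, and their widths sum to $\tfrac{4}{3}$. Your fallback ``$a+b\le$ vertical extent of the union'' also fails, already for two side-by-side squares of side $\tfrac12$.

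\textbf{(iv) The averaging route.} You concede that $\phi\le1$ can fail. A promised case analysis over rotation angles is not an argument.

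The idea you are missing is how the paper avoids both the one-versus-two separation and any region-based bound.
\begin{itemize}
\item Under the contradiction hypothesis $S>H\ge2$, every pairwise sum exceeds $H-1\ge1$. Hence the horizontal projections pairwise overlap, and by Helly in dimension one a single vertical line meets all three squares. Order them $A,B,C$ along it.
\item Only consecutive pairs are separated, by lines with normals $(\alpha_1,1)$ and $(\alpha_2,1)$. Each separation is written as a support-function inequality between the centres.
\item The two inequalities are added, so the vertical terms telescope to $y_C-y_A\le H-\tfrac12(w_A+w_C)$. The slope terms are absorbed using the walls $\tfrac{w_i}{2}\le x_i\le 1-\tfrac{w_i}{2}$.
\item The resulting coefficients $K_A,K_B,K_C$ are exactly semi-perimeters of bounding boxes of explicit triangles built from $\alpha_1,\alpha_2$. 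Lemmas~\ref{lem:right} and~\ref{lem:tri} then give $K_i+K_j\ge 2+C_\alpha$ for every pair.
\item Together with $s_i\le1$, this contradicts the master inequality $\sum_i K_i s_i\le H+C_\alpha$.
\end{itemize}
This treats every slope uniformly, which is precisely the case your plan leaves open.
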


For $H \ge 3$ this is immediate, since each side is at most $1$ (see Step~1 below), so
the content of the theorem is the range $2 \le H \le 3$. The case $H = 2$ is exactly the
question asked on MathOverflow in July 2021~\cite{MO396776}: three
interior-disjoint squares in a $1 \times 2$ rectangle have total side length
at most $2$.

\subsection*{Relation to the existing partial answers}
A partial answer to the MathOverflow question introduces two separating
lines and bounds each square by the largest square inscribed in a triangle
using P\'olya's formula, but its monotonicity analysis is completed only under
an unproven auxiliary assumption $a,d<3.66$ on the line
parameters~\cite{MO396776}. The proof below keeps the same separating-line
setup and replaces the inscribed-square analysis by elementary semi-perimeter
lemmas for enclosing rectangles (Section~\ref{sec:lemmas}). This removes all
restrictions on the separating-line slopes and yields the full $1\times H$
statement.

\subsection*{Sharpness}
The threshold $H=2$ is sharp: for every $1\le H<2$, the constructions in
Remark~\ref{rem:sharp} have total side length greater than $H$.

\subsection*{Connection with Erd\H{o}s problem~\#106}
Corollary~\ref{cor:guillotine} shows that every five-square packing in the
unit square with an axis-parallel guillotine cut has total side length at
most $2$.

\subsection*{Notation}
A square of side $s$ with center $(x,y)$ and orientation $\theta$ has perpendicular
edge directions $u = (\cos\theta, \sin\theta)$ and $v = (-\sin\theta, \cos\theta)$.
Write
\[
  \rho = |\cos\theta| + |\sin\theta| \in [1,\sqrt2],
  \qquad
  w = s\rho .
\]
Here $w$ is the side of the axis-aligned bounding box (both horizontal and vertical
sides equal $w$), and $\rho \in [1,\sqrt2]$ measures how much wider than $s$ the
square becomes when rotated. For $\alpha \in \R$ set
\[
  \psi(\alpha) = |\alpha\cos\theta + \sin\theta| + |-\alpha\sin\theta + \cos\theta| ,
\]
so that the width of the square in the unit direction $(\alpha,1)/\sqrt{1+\alpha^2}$
equals $s\,\psi(\alpha)/\sqrt{1+\alpha^2}$, and its half-width is
$s\,\psi(\alpha)/\bigl(2\sqrt{1+\alpha^2}\bigr)$ (the support function of a square);
note $\psi(0) = \rho$.

Throughout, the \emph{semi-perimeter} of a $p \times q$ rectangle is $p + q$. We use
repeatedly the elementary identity
\begin{equation}\label{eq:span}
  \spn\{0,\; p,\; p+q\} = \tfrac12\bigl(|p| + |q| + |p+q|\bigr),
\end{equation}
where $\spn$ of a finite set of reals is its maximum minus its minimum, together with
the fact that if $p + q + r = 0$ then
$p_+ + q_+ + r_+ = \tfrac12(|p| + |q| + |r|)$, where $x_+ = \max(x,0)$ denotes the
positive part.

\section{Three semi-perimeter lemmas}\label{sec:lemmas}

\begin{fact}[flush enclosing rectangle]\label{fact:flush}
Among all rectangles, of any orientation, containing a given convex polygon, one of
minimum perimeter has a side collinear with an edge of the polygon.
\end{fact}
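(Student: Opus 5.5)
We prove the flush property by reducing the minimum-perimeter problem to a one-parameter family indexed by orientation, and showing that the objective is concave between consecutive critical directions.

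\emph{Parametrization.} Let $P$ be the convex polygon. For a direction $\phi\in[0,\pi/2)$ let $e_\phi=(\cos\phi,\sin\phi)$ and $e_\phi^\perp=(-\sin\phi,\cos\phi)$. The smallest rectangle with sides parallel to $e_\phi,e_\phi^\perp$ containing $P$ is the intersection of the two slabs determined by the widths
\[
W_1(\phi)=h_P(e_\phi)+h_P(-e_\phi),\qquad W_2(\phi)=h_P(e_\phi^\perp)+h_P(-e_\phi^\perp),
\]
where $h_P$ is the support function. Any rectangle of orientation $\phi$ containing $P$ contains this bounding box, so its semi-perimeter is at least $F(\phi)=W_1(\phi)+W_2(\phi)$. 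It therefore suffices to show that $F$ attains its minimum at some $\phi$ for which a side of the bounding box is collinear with an edge of $P$.

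\emph{Local form of $F$.} Since $P=\conv\{v_1,\dots,v_m\}$, we have $h_P(e)=\max_i\langle v_i,e\rangle$. Call $\phi$ critical if some $e_\phi$ or $e_\phi^\perp$ is an outer normal of an edge of $P$; there are finitely many critical values in $[0,\pi/2)$. On an open interval $I$ between consecutive critical values, the four supporting vertices (those attaining $h_P(\pm e_\phi)$ and $h_P(\pm e_\phi^\perp)$) are fixed. Hence on $I$
\[
F(\phi)=\langle a,e_\phi\rangle+\langle b,e_\phi^\perp\rangle
\]
for fixed vectors $a,b$ built from differences of vertices. This has the form $A\cos\phi+B\sin\phi=C\cos(\phi-\phi_0)$ with $C\ge0$, since $F>0$.

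\emph{Concavity and conclusion.} On $I$, $F$ is a nonnegative sinusoid, and a nonnegative sinusoid is concave wherever it is positive ($F''=-F$). A concave function on an interval attains its minimum over the closure at an endpoint. Since $F$ is continuous, as support functions are, the global minimum over the compact circle $[0,\pi/2]$ (periodic) is attained at a critical $\phi$. At such a $\phi$ the direction is an edge normal, so the corresponding supporting line of the bounding box contains that entire edge, i.e.\ a side is collinear with it. If $P$ has no two critical directions, the argument is the same with the whole circle as a single interval, where concavity forces $F$ to be constant.

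\emph{Degenerate cases and the main step.} When $P$ is a segment or a point the statement is trivial, or interpreted with degenerate rectangles. The key step is the local-constancy claim: away from edge normals the supporting vertex in each of the four directions is unique, hence fixed, which makes $F$ an exact sinusoid there. It also needs checking that the constant $C$ is positive, so that $F''=-F\le0$; this follows from $F>0$ for a nondegenerate polygon.
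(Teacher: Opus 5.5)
Your proof is correct and follows the paper's argument essentially step for step: you reduce to the bounding-box semi-perimeter $F(\phi)$, observe that $F$ is a sinusoid $A\cos\phi+B\sin\phi$ with $F''=-F\le 0$ on arcs where the supporting vertices are fixed, and conclude that the minimum occurs at an angle where some supporting line contains an edge. One small slip: the critical angles should be those at which one of the four directions $\pm e_\phi,\pm e_\phi^\perp$ is an edge normal, not only $e_\phi$ or $e_\phi^\perp$; your later sentence about the four supporting vertices already assumes this.
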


\begin{proof}
This is the perimeter counterpart of the minimum-area statement obtained by rotating
calipers; compare~\cite{Toussaint83}. For an angle $\varphi$, let $\sigma(\varphi)$ be
the semi-perimeter of the bounding box of the polygon in the frame rotated by
$\varphi$. Every enclosing rectangle at angle $\varphi$ contains that bounding box,
hence has semi-perimeter at least $\sigma(\varphi)$; it therefore suffices to locate the
minimum of $\sigma$, which exists since $\sigma$ is continuous and $\pi/2$-periodic. By
definition $\sigma(\varphi)$ is the sum of the lengths of the orthogonal projections of
the polygon onto $e_1(\varphi) = (\cos\varphi, \sin\varphi)$ and
$e_2(\varphi) = (-\sin\varphi, \cos\varphi)$. Each such length equals
$(z - z') \cdot e_j(\varphi)$ for the pair $z, z'$ of vertices supporting the polygon in
that direction; hence on any arc of angles on which these supporting vertices do not
change, both lengths are linear in $(\cos\varphi, \sin\varphi)$, so
$\sigma(\varphi) = A\cos\varphi + B\sin\varphi$ and $\sigma'' = -\sigma < 0$ there.
Thus $\sigma$ is strictly concave on such an arc and any interior critical point is a
strict local maximum; the minimum is therefore attained at an angle at which a
supporting vertex changes. At such an angle a supporting line contains an edge of the
polygon, so the corresponding side of the bounding box is collinear with that edge.
\end{proof}

\begin{lemma}\label{lem:right}
Every rectangle containing a right triangle with legs $\lambda, \mu \ge 0$ has
semi-perimeter at least $\lambda + \mu$.
\end{lemma}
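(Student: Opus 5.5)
By Fact~\ref{fact:flush}, it suffices to check enclosing rectangles that have a side collinear with one of the three edges of the triangle. Any other enclosing rectangle has semi-perimeter at least the minimum, and that minimum is attained by a flush rectangle. For a flush rectangle at a given orientation, we may also shrink it to the bounding box of the triangle in that frame, since shrinking only lowers the semi-perimeter. So the task reduces to computing the semi-perimeter of the bounding box in three specific frames and checking that each value is at least $\lambda+\mu$. The degenerate cases $\lambda=0$ or $\mu=0$ give a segment, and there the claim is immediate: any rectangle containing a segment of length $\ell$ has semi-perimeter at least $\ell$, because the diagonal is at most the sum of the sides.

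Place the triangle with vertices $(0,0)$, $(\lambda,0)$, $(0,\mu)$.

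\emph{Frame flush with a leg.} The bounding box is the $\lambda\times\mu$ rectangle itself, with semi-perimeter exactly $\lambda+\mu$. The same holds by symmetry for the other leg.

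\emph{Frame flush with the hypotenuse.} Let $c=\sqrt{\lambda^2+\mu^2}$. The width along the hypotenuse direction is $c$, because the right-angle vertex projects inside the segment (both acute angles are less than $\pi/2$). The height in the perpendicular direction is the altitude $\lambda\mu/c$. The semi-perimeter is therefore $c+\lambda\mu/c$. We need
\[
c+\frac{\lambda\mu}{c}\ge\lambda+\mu,
\qquad\text{i.e.}\qquad
c^2+\lambda\mu\ge(\lambda+\mu)c .
\]
This is $(c-\lambda)(c-\mu)\ge0$. Both factors are nonnegative, since $c\ge\max(\lambda,\mu)$. So the inequality holds.

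Combining the two cases, the minimum over flush rectangles is $\lambda+\mu$, and Fact~\ref{fact:flush} gives the lemma. The only step needing care is the hypotenuse frame: one must confirm that the projection onto the hypotenuse direction really has length $c$ (the right-angle vertex projects inside the segment) and then find the factorization $(c-\lambda)(c-\mu)\ge0$. Everything else is bookkeeping.
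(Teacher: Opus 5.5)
Your proof is correct and follows the paper's argument essentially line for line. You handle the degenerate segment case via the diagonal, use Fact~\ref{fact:flush} to reduce to the three flush orientations, get exactly $\lambda+\mu$ in the leg frames and $c+\lambda\mu/c$ in the hypotenuse frame, and close with $(c-\lambda)(c-\mu)\ge 0$; your check that the right-angle vertex projects inside the hypotenuse simply makes explicit what the paper leaves implicit in its ``base times altitude'' description.
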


\begin{proof}
If $\lambda\mu = 0$ the triangle degenerates to a segment of length $\lambda + \mu$, and
any rectangle containing it has semi-perimeter at least its diagonal, hence at least
$\lambda + \mu$; assume therefore $\lambda, \mu > 0$. By Fact~\ref{fact:flush} it
suffices to check the orientations flush with an edge.
Flush with either leg, the bounding box is $\lambda \times \mu$, of semi-perimeter
$\lambda + \mu$. Flush with the hypotenuse, of length
$r = \sqrt{\lambda^2 + \mu^2}$, the box is $r \times \frac{\lambda\mu}{r}$ (base times
altitude), and
\[
  r + \frac{\lambda\mu}{r} - (\lambda+\mu)
  = \frac{r^2 + \lambda\mu - r\lambda - r\mu}{r}
  = \frac{(r-\lambda)(r-\mu)}{r} \; \ge \; 0 ,
\]
since $r \ge \max(\lambda,\mu)$.
\end{proof}

\begin{lemma}\label{lem:tri}
For $a, b \in \R$ let $T(a,b) = \conv\{(0,0), (a,1), (b,1)\}$ and let
$C = \max(0,a,b) - \min(0,a,b)$ be its horizontal span, so that
$C = \tfrac12\bigl(|a| + |a-b| + |b|\bigr)$ by~\eqref{eq:span}. Then every rectangle
containing $T(a,b)$ has semi-perimeter at least
\[
  1 + C - |a| ,
  \qquad\text{and, symmetrically,}\qquad
  1 + C - |b| .
\]
\end{lemma}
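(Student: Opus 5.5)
By the reflection $x\mapsto -x$, which maps $T(a,b)$ to $T(-a,-b)$ and preserves $C$, $|a|$ and $|b|$, and by the symmetry $a\leftrightarrow b$, it suffices to prove the bound $1+C-|a|$ for all $a,b$. The second bound then follows by swapping the roles of $a$ and $b$.

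First the degenerate case $a=b$. Here $T$ is a segment of length $L_a=\sqrt{1+a^2}$ and $C=|a|$. Any enclosing rectangle has semi-perimeter at least its diagonal, hence at least $L_a$. Since $L_a\ge 1=1+C-|a|$, we are done.

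For $a\neq b$ the triangle is nondegenerate, and by Fact~\ref{fact:flush} I only need to check the three bounding boxes flush with an edge. Flush with the horizontal edge $[(a,1),(b,1)]$, the box is $C\times 1$. Its semi-perimeter is $1+C\ge 1+C-|a|$, so this case is trivial.

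Flush with the edge from $(0,0)$ to $(a,1)$, of length $L_a$, I compute the box explicitly. Project the vertices onto the unit vector $(a,1)/L_a$. The box has length $\spn\{0,1+a^2,1+ab\}/L_a$ along the edge and height $|a-b|/L_a$, the altitude from $(b,1)$. Multiplying through by $L_a$, the claim becomes
\[
  \spn\{0,\,1+a^2,\,1+ab\}+|a-b| \;\ge\; (1+C-|a|)\,L_a .
\]
Two elementary bounds should suffice. The first is $L_a\le 1+|a|$. The second is $1+C-|a|\le 1+|a-b|$, which follows from $C=\tfrac12(|a|+|b|+|a-b|)$ together with $|b|\le|a|+|a-b|$. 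I would split according to the signs of $a$, $b$ and $a-b$ (after the reflection, assume $a\ge0$). In each sign pattern $C$ and the span are explicit piecewise-polynomial expressions. The inequality then reduces either to a linear comparison or, after squaring, to a polynomial inequality like the factored one in Lemma~\ref{lem:right}. For example, when $b$ lies between $0$ and $a$ we have $C=|a|$, so the target is $L_a$, and the span alone already gives at least $1+a^2=L_a\cdot L_a$.

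The edge from $(0,0)$ to $(b,1)$ is handled by the same computation with $a$ and $b$ exchanged. The box there has semi-perimeter
\[
  \bigl(\spn\{0,1+b^2,1+ab\}+|a-b|\bigr)/L_b ,
\]
and this must still be compared with $1+C-|a|$, not $1+C-|b|$. When $|b|\ge|a|$ this is implied by the version with $|b|$, but that version is itself the claim being proved. So I would instead argue directly: show that this quantity is at least $1+C-\min(|a|,|b|)$ by the same sign-case analysis.

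I expect this last step, the bookkeeping over sign cases for the two slanted flush orientations, to be the main obstacle. If the algebra becomes unwieldy, my fallback is to bound the box from below by a right triangle contained in $T$ and apply Lemma~\ref{lem:right}. The natural candidate has its right angle at the foot of the perpendicular from a top vertex to the opposite slanted side. If that foot lies in $T$, the legs are the altitude and a segment of the side, and I would compare their sum with $1+C-|a|$.
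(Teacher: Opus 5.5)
Your framework is sound. For $a\neq b$, Fact~\ref{fact:flush} reduces everything to the three flush boxes, and the inequalities you want for the two slanted boxes are in fact true, including the bound $1+C-\min(|a|,|b|)$ you propose for the box flush with the edge from $O=(0,0)$ to $Q=(b,1)$. The gap is that the proposal never proves them, and that is the whole content of the lemma. The tools you name for doing so also do not work.

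Your two elementary bounds, used together, lose too much. For $0<a<b$ one has $1+C-|a|=1+|a-b|$ exactly, and
\[
  \spn\{0,1+a^2,1+ab\}+|a-b|-(1+|a-b|)(1+|a|)=a(a-1),
\]
which is negative for $0<a<1$; for instance $a=\tfrac12$, $b=1$ gives $2<\tfrac94$. In the pattern $b<0\le a$, the entry $1+ab$ can be negative, and the extra span it creates is indispensable: for $a=1$, $b=-10$, replacing the span by $1+a^2$ gives $13<11\sqrt2$. Your fallback fails too. Take $a=1$, $b=2$ and $P=(a,1)$. The foot of the perpendicular from $Q$ to the line $OP$ lies outside $T$. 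The altitude from $P$ to $OQ$ cuts $T$ into right triangles with leg sums $4/\sqrt5$ and $3/\sqrt5$, both below $1+C-|a|=2$.

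The paper sidesteps most of this by first splitting on where $b$ sits relative to $I=[\min(0,a),\max(0,a)]$, using that $C-|a|$ is the distance from $b$ to $I$.
\begin{itemize}
\item If $b\in I$, the target is $1$, and the diagonal bound suffices.
\item If $a$ and $b$ lie on opposite sides of $0$, say $b<0\le a$, then $(0,1)\in T$. So $T\supseteq\conv\{(0,0),(0,1),(b,1)\}$, a right triangle with legs $1$ and $|b|=C-|a|$, and Lemma~\ref{lem:right} finishes with no flush computation. This is the right triangle a fallback should use: its right angle is at the foot of the perpendicular from the bottom vertex to the top edge, not on a slanted side.
\item Only for $0\le a<b$, after reflection, are flush boxes computed, and there every span is sign-definite. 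Flush with $OP$, the semi-perimeter is $\ell+L(a+1)/\ell$ with $\ell=\sqrt{1+a^2}$ and $L=b-a$. One applies $\ell\ge1$ to the first term and $a+1\ge\ell$ only to the second. Flush with $OQ$, it is $m+L/m$ with $m=\sqrt{1+b^2}$, and $m+L/m-(1+L)=(m-1)(m-L)/m\ge0$ since $m\ge b\ge L$.
\end{itemize}
The last case compares the $OQ$ box with $1+C-|a|$ directly, which disposes of the issue you flagged.
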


\begin{proof}
We prove the first bound; the second follows by symmetry, swapping $a$ and $b$. Set $L = C - |a|$. One checks directly that $L$ is the distance
from $b$ to the interval $[\min(0,a), \max(0,a)]$, so $L \ge 0$.

\smallskip
\noindent\textbf{Case 1: $L = 0$.}
Here $b$ lies in the interval $[\min(0,a),\max(0,a)]$, so the bound reduces to
$1 + C - |a| = 1 + L = 1$ and a crude estimate suffices. The triangle has one vertex at
height $0$ and another at height $1$, so it contains two points whose distance is at
least $1$. Any $p \times q$ rectangle containing these two points has diagonal at
least their distance, so $\sqrt{p^2+q^2} \ge 1$. Since $p + q \ge \sqrt{p^2+q^2}$, we
conclude $p + q \ge 1 = 1 + L$.

\smallskip
\noindent\textbf{Case 2: $b$ on the opposite side of $a$.}
By the reflection $x \mapsto -x$ if needed we may assume $b < 0 \le a$, so $L = |b|$.
Since $b < 0 \le a$, the point $(0,1)$ lies on the top edge between $(b,1)$ and
$(a,1)$, so $T(a,b)$ contains the right triangle
$\Delta = \conv\{(0,0),(0,1),(b,1)\}$, whose legs are $1$ and $L$. By
Lemma~\ref{lem:right}, every rectangle containing $\Delta$ has semi-perimeter at least
$1 + L$; since $T(a,b) \supseteq \Delta$, the same bound holds for every rectangle
containing $T(a,b)$.

\smallskip
\noindent\textbf{Case 3: $b$ on the same side as $a$.}
After a reflection we may assume $0 \le a < b$, so $L = b - a$. Write $O = (0,0)$,
$P = (a,1)$, $Q = (b,1)$. By Fact~\ref{fact:flush} we check the three flush
orientations.

\smallskip
\emph{Flush with $OP$.} Let $\ell = \sqrt{a^2+1} = |OP|$, and project onto the unit
vector $(a,1)/\ell$ along $OP$. The images of $O = (0,0)$, $P = (a,1)$, $Q = (b,1)$ are
\[
  0, \qquad \frac{a^2+1}{\ell} = \ell, \qquad \frac{ab+1}{\ell},
\]
respectively. Since $0 \le a < b$ we have $ab \ge a^2$, hence
$\frac{ab+1}{\ell} \ge \ell$, so the images run from $0$ up to $\frac{ab+1}{\ell}$.
Using $ab + 1 = (a^2+1) + a(b-a) = \ell^2 + aL$, the span along $OP$ is
\[
  \frac{ab+1}{\ell} = \frac{\ell^2 + aL}{\ell} = \ell + \frac{aL}{\ell}.
\]
Perpendicular to $OP$, projecting onto $(-1,a)/\ell$, the images of $O, P, Q$ are
$0$, $0$, $-L/\ell$; the span (maximum minus minimum) is therefore $L/\ell$. Hence the
semi-perimeter is
\[
  \ell + \frac{aL}{\ell} + \frac{L}{\ell} = \ell + \frac{L(a+1)}{\ell} \; \ge \; 1 + L ,
\]
because $\ell \ge 1$ and $a + 1 \ge \sqrt{a^2+1} = \ell$.

\smallskip
\emph{Flush with $OQ$.} Let $m = \sqrt{b^2+1} = |OQ|$, and project onto the unit
vector $(b,1)/m$ along $OQ$. The images of $O, P, Q$ are $0$, $(ab+1)/m$, $m$, with
$0 \le (ab+1)/m \le m$ (the upper bound since $ab+1 \le b^2+1 = m^2$, using
$a \le b$), so the span along $OQ$ is $m$. Perpendicular to $OQ$, projecting onto
$(-1,b)/m$, the images of $O, P, Q$ are $0$, $L/m$, $0$, so that span is $L/m$.
Hence the semi-perimeter is $m + \dfrac{L}{m}$. Since $m = \sqrt{b^2+1} \ge 1$ and
$m \ge b \ge L$, both factors in
\[
  m + \frac{L}{m} - (1+L) = \frac{m^2 + L - m - mL}{m} = \frac{(m-1)(m-L)}{m}
\]
are nonnegative, so $m + \dfrac{L}{m} \ge 1 + L$.

\smallskip
\emph{Flush with $PQ$ (the horizontal top edge).} The bounding box is
$[0,b] \times [0,1]$, of semi-perimeter $b + 1 \ge L + 1$, since $b \ge b - a = L$.
\end{proof}

\begin{lemma}\label{lem:opposite}
Let $p,q\ge1$ and let
\[
  T_{p,q}=\conv\{(0,0),(-p,1),(q,1)\}.
\]
Every rectangle containing $T_{p,q}$ has semi-perimeter at least
\[
  p+q+\tfrac12.
\]
\end{lemma}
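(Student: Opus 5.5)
By Fact~\ref{fact:flush} it suffices to check the three orientations in which the rectangle is flush with an edge of $T_{p,q}$. Write $O=(0,0)$, $P=(-p,1)$ and $Q=(q,1)$.

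\emph{Flush with $PQ$.} This is the horizontal top edge. The bounding box is $[-p,q]\times[0,1]$, with semi-perimeter $p+q+1\ge p+q+\tfrac12$.

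\emph{Flush with $OQ$.} Let $m=\sqrt{q^2+1}$. Projecting onto $(q,1)/m$, the images of $O,Q,P$ are $0$, $m$ and $(1-pq)/m$. Since $pq\ge1$, the last value is at most $0$. The span along $OQ$ is therefore
\[
m+\frac{pq-1}{m}=\frac{q^2+pq}{m}=\frac{q(p+q)}{m}.
\]
Projecting onto the normal $(-1,q)/m$, the images of $O,Q,P$ are $0$, $0$ and $(p+q)/m$. The semi-perimeter is therefore
\[
\frac{(p+q)(q+1)}{m}.
\]
It remains to show $(p+q)(q+1)/\sqrt{q^2+1}\ge p+q+\tfrac12$. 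This is equivalent to
\[
(p+q)\left(\frac{q+1}{\sqrt{q^2+1}}-1\right)\ge\tfrac12 .
\]
Set $\phi(q)=(q+1)/\sqrt{q^2+1}-1$. One computes $\phi'(q)=(1-q)/(q^2+1)^{3/2}$, so $\phi$ is decreasing for $q\ge1$ and tends to $0$. Hence the left-hand side is not monotone in $q$ in an obvious way, and this is the crux. For fixed $q$ the left-hand side is increasing in $p$, so it suffices to take $p=1$ and prove
\[
(1+q)\,\phi(q)=\frac{(1+q)^2}{\sqrt{1+q^2}}-(1+q)\ge\tfrac12 \qquad (q\ge1).
\]
Since $(1+q)^2=1+q^2+2q$, the left side equals
\[
\sqrt{1+q^2}+\frac{2q}{\sqrt{1+q^2}}-1-q .
\]
I would use $\sqrt{1+q^2}\ge q$ and $\sqrt{1+q^2}-q=1/(\sqrt{1+q^2}+q)$ to rewrite this as
\[
\frac{1}{\sqrt{1+q^2}+q}+\frac{2q}{\sqrt{1+q^2}}-1 .
\]
As $q\to\infty$ this tends to $0+2-1=1$. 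At $q=1$ it equals $(\sqrt2-1)+\sqrt2-1=2\sqrt2-2\approx0.83$. I expect a short calculus or squaring argument to show it stays at least $\tfrac12$. For example, $2q/\sqrt{1+q^2}\ge\sqrt2$ for $q\ge1$, which already gives $\sqrt2-1+(\text{positive term})$. That alone is not enough, since $\sqrt2-1<\tfrac12$, so I would keep the first term. Squaring, $2q/\sqrt{1+q^2}\ge 3/2$ holds iff $16q^2\ge 9(1+q^2)$, that is, iff $q\ge 3/\sqrt7\approx1.134$. On the remaining range $1\le q\le 3/\sqrt7$ I would bound directly: $\sqrt{1+q^2}+q\le 3.3$ there, so the first term is at least $0.30$, and $2q/\sqrt{1+q^2}\ge\sqrt2$, giving a total of at least $0.30+1.414-1>\tfrac12$.

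\emph{Flush with $OP$.} This case is the mirror image of the previous one under $x\mapsto -x$, with the roles of $p$ and $q$ exchanged, so the same bound holds.

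The main obstacle is the single-variable inequality $(1+q)\phi(q)\ge\tfrac12$ in the $OQ$ case. The rest is the routine projection bookkeeping already used in Lemma~\ref{lem:tri}. The hypothesis $p,q\ge1$ enters in two places: it guarantees $pq\ge1$, which fixes which vertex is extreme in the projection onto the edge direction, and it justifies reducing to $p=1$.
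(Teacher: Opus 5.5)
Your proof is correct and follows the paper's route: the same three flush orientations and the same projection bookkeeping, giving semi-perimeter $(p+q)(q+1)/\sqrt{q^2+1}$ for a slanted edge. The only difference is the last step. Instead of reducing to $p=1$ and splitting the range of $q$, the paper writes the excess as $\frac{p+q}{m}(q+1-m)$ with $m=\sqrt{q^2+1}$ and bounds each factor: $\frac{p+q}{m}\ge\frac{q+1}{m}\ge1$ and $q+1-m=1-\frac{1}{m+q}\ge\frac12$, which is shorter than your case split.
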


\begin{proof}
Put $C=p+q$. By Fact~\ref{fact:flush}, it is enough to check the three
orientations flush with an edge of the triangle.

Flush with the horizontal edge, the bounding box has semi-perimeter $C+1$.
For the edge from $(0,0)$ to $(-p,1)$, put $r=\sqrt{p^2+1}$. Projection onto
$(-p,1)/r$ gives the three values
\[
  0,\qquad r,\qquad \frac{1-pq}{r},
\]
and projection onto the perpendicular unit vector $(1,p)/r$ gives
$0,0,C/r$. Since $pq\ge1$, the two projection spans are $pC/r$ and $C/r$;
thus the semi-perimeter of this bounding box is
\[
  \frac{C(p+1)}{r}.
\]
Now $C\ge p+1\ge r$, while
\[
  r-p=\frac{1}{r+p}\le\frac12,
\]
so
\[
  \frac{C(p+1)}{r}-C
  =\frac{C}{r}(p+1-r)\ge\frac12.
\]
The orientation flush with the edge from $(0,0)$ to $(q,1)$ is symmetric.
Hence every flush bounding box, and therefore every enclosing rectangle,
has semi-perimeter at least $C+\tfrac12$.
\end{proof}

\section{Proof of the Strip Theorem}\label{sec:proof}

Let the squares be $A$, $B$, $C$, with sides $s_A, s_B, s_C$, orientations
$\theta_A, \theta_B, \theta_C$ and centers $c_i = (x_i, y_i)$, and suppose for
contradiction that
\[
  S := s_A + s_B + s_C > H .
\]
Choose $\varepsilon > 0$ with $(1-\varepsilon)S > H$, shrink each square by the factor
$1 - \varepsilon$ about its center, and relabel; all the separations below are then
strict, so we may argue with strict disjointness and no touching degeneracies.

\subsection*{Step 1: a common vertical line}
The horizontal projection of a square has length $w = s\rho \ge s$; since it must fit
in $[0,1]$, every side satisfies $s_i \le 1$. In particular $S \le 3$, so for
$H \ge 3$ we are already done; assume from now on that $2 \le H < 3$. For each pair,
\[
  s_i + s_j = S - s_k > H - 1 \; \ge \; 1 .
\]
If two squares had horizontal projections with disjoint interiors, those projections
would occupy total length $s_i\rho_i + s_j\rho_j \le 1$, forcing $s_i + s_j \le 1$, a
contradiction. So the three horizontal projections pairwise overlap; intervals have the
Helly property in dimension one, hence there is a common abscissa $x_0$: the vertical
line $x = x_0$ meets all three squares. Their intersections with this line are disjoint
segments; order the squares along it as $A$ (bottom), $B$ (middle), $C$ (top).

\subsection*{Step 2: two separating lines}
Disjoint compact convex sets can be separated by a line. A line separating $A$ from $B$
cannot be vertical, since their horizontal projections overlap in an open set, so its
normal can be scaled to $(\alpha_1, 1)$; evaluating on the common vertical line shows
that $A$ lies on the side where $\alpha_1 x + y$ is smaller. Likewise a line with
normal $(\alpha_2, 1)$ separates $B$ (below) from $C$ (above).

Consider the pair $A, B$. Projecting onto the unit normal
$n_1 = (\alpha_1,1)/\sqrt{1+\alpha_1^2}$, the two squares occupy disjoint intervals,
so the gap between their centers is at least the sum of their half-widths in this
direction:
\[
  n_1 \cdot (c_B - c_A) \; \ge \;
  \frac{s_A\,\psi_A(\alpha_1) + s_B\,\psi_B(\alpha_1)}{2\sqrt{1+\alpha_1^2}},
\]
the right-hand side being the sum of the half-widths defined in the Notation. Since
$n_1 \cdot (c_B - c_A) = \bigl(\alpha_1(x_B - x_A) + (y_B - y_A)\bigr)/\sqrt{1+\alpha_1^2}$,
multiplying through by $\sqrt{1+\alpha_1^2}$ gives
\begin{align}
  y_B - y_A &\ \ge\ \tfrac{s_A}{2}\psi_A(\alpha_1) + \tfrac{s_B}{2}\psi_B(\alpha_1)
              - \alpha_1(x_B - x_A), \label{eq:sep1}\\
  \intertext{and the same argument applied to $B, C$ with normal
  $n_2 = (\alpha_2,1)/\sqrt{1+\alpha_2^2}$ gives}
  y_C - y_B &\ \ge\ \tfrac{s_B}{2}\psi_B(\alpha_2) + \tfrac{s_C}{2}\psi_C(\alpha_2)
              - \alpha_2(x_C - x_B). \label{eq:sep2}
\end{align}

\subsection*{Step 3: the master inequality}
Adding \eqref{eq:sep1} and \eqref{eq:sep2}, the left-hand sides telescope to
$y_C - y_A$:
\begin{equation}\label{eq:added}
  y_C - y_A \;\ge\; \tfrac{s_A}{2}\psi_A(\alpha_1)
    + \tfrac{s_B}{2}\bigl(\psi_B(\alpha_1) + \psi_B(\alpha_2)\bigr)
    + \tfrac{s_C}{2}\psi_C(\alpha_2) - X ,
\end{equation}
where
\[
  X := \alpha_1(x_B - x_A) + \alpha_2(x_C - x_B)
     = -\alpha_1 x_A + (\alpha_1 - \alpha_2)x_B + \alpha_2 x_C .
\]
Since each square lies in $R$, its center keeps half a bounding box away from every
side: $\tfrac{w_i}{2} \le x_i \le 1 - \tfrac{w_i}{2}$ and
$\tfrac{w_i}{2} \le y_i \le H - \tfrac{w_i}{2}$. The vertical bounds give
$y_C - y_A \le H - \tfrac12(w_A + w_C)$, so \eqref{eq:added} becomes
\begin{equation}\label{eq:beforeX}
  \tfrac{s_A}{2}\psi_A(\alpha_1)
    + \tfrac{s_B}{2}\bigl(\psi_B(\alpha_1) + \psi_B(\alpha_2)\bigr)
    + \tfrac{s_C}{2}\psi_C(\alpha_2)
    + \tfrac12\bigl(w_A + w_C\bigr) \;\le\; H + X .
\end{equation}
For $X$ we use the horizontal bounds. Each center satisfies
$\tfrac{w_i}{2} \le x_i \le 1 - \tfrac{w_i}{2}$, so for any $\gamma \in \R$ the linear
form $\gamma x_i$ attains its maximum at one of the two endpoints, and in either case
\[
  \gamma x_i \; \le \; \gamma_+ - \frac{|\gamma| w_i}{2} .
\]
We apply this to the three terms of $X$, namely to $-\alpha_1 x_A$, to
$(\alpha_1 - \alpha_2) x_B$ and to $\alpha_2 x_C$. Their coefficients
$-\alpha_1$, $\alpha_1 - \alpha_2$, $\alpha_2$ sum to zero, so by the identity in the
Notation the sum of their positive parts equals
$\tfrac12\bigl(|\alpha_1| + |\alpha_1 - \alpha_2| + |\alpha_2|\bigr)$; write $C_\alpha$
for this quantity. Adding the three bounds therefore gives
\begin{equation}\label{eq:Xbound}
  X \; \le \; C_\alpha
    - \tfrac12\bigl(|\alpha_1| w_A + |\alpha_1 - \alpha_2| w_B + |\alpha_2| w_C\bigr) .
\end{equation}
Substituting \eqref{eq:Xbound} into \eqref{eq:beforeX} and moving every $w_i$ term to
the left-hand side, the coefficient of $s_i$ there becomes, after $w_i = s_i\rho_i$,
\[
  \begin{aligned}
    K_A &= \tfrac12\bigl(\psi_A(\alpha_1) + \rho_A(1 + |\alpha_1|)\bigr), \\
    K_B &= \tfrac12\bigl(\psi_B(\alpha_1) + \psi_B(\alpha_2)
           + \rho_B|\alpha_1 - \alpha_2|\bigr), \\
    K_C &= \tfrac12\bigl(\psi_C(\alpha_2) + \rho_C(1 + |\alpha_2|)\bigr),
  \end{aligned}
\]
and we obtain the \emph{master inequality}
\begin{equation}\label{eq:master}
  K_A s_A + K_B s_B + K_C s_C \; \le\; H + C_\alpha .
\end{equation}

\subsection*{Step 4: the coefficients are semi-perimeters}
At this point the coefficients $K_i$ are purely algebraic, and nothing in their
definition suggests a lower bound. The observation that drives the proof is that each of
them is \emph{exactly} the semi-perimeter of a bounding box of a triangle built from
$\alpha_1, \alpha_2$, read in the frame of the corresponding square; Lemmas~\ref{lem:right} and~\ref{lem:tri} then bound them from below.

For a bounded set $V \subset \R^2$, the bounding box of $V$ in the frame of square $i$,
with axes $u_i, v_i$, has semi-perimeter $\spn(u_i \cdot V) + \spn(v_i \cdot V)$.
Applying~\eqref{eq:span} to each of the two spans, one finds:
\begin{itemize}
  \item $K_A$ is the semi-perimeter of the bounding box, in the frame of $A$, of the
        right triangle
        $\Delta_{\alpha_1} = \conv\{(0,0), (\alpha_1,0), (\alpha_1,1)\}$, whose legs
        are $|\alpha_1|$ and $1$;
  \item $K_B$ is the semi-perimeter of the bounding box, in the frame of $B$, of the
        triangle $T(\alpha_1, \alpha_2)$ of Lemma~\ref{lem:tri}, whose horizontal span
        is exactly $C_\alpha$;
  \item $K_C$ is the semi-perimeter of the bounding box, in the frame of $C$, of
        $\Delta_{\alpha_2}$, whose legs are $|\alpha_2|$ and $1$.
\end{itemize}

We carry out the computation for $K_A$; the other two are of the same shape. The
vertices of $\Delta_{\alpha_1}$ have $u_A$-images $0$, $\alpha_1\cos\theta_A$,
$\alpha_1\cos\theta_A + \sin\theta_A$ and $v_A$-images $0$, $-\alpha_1\sin\theta_A$,
$-\alpha_1\sin\theta_A + \cos\theta_A$; both triples are of the form $\{0, p, p+q\}$,
so \eqref{eq:span} applies to each and the two spans add up to
\begin{align*}
&\spn(u_A \cdot \Delta_{\alpha_1})
 + \spn(v_A \cdot \Delta_{\alpha_1}) \\[2pt]
&= \tfrac12\Bigl(
 |\alpha_1\cos\theta_A|
 +|\alpha_1\sin\theta_A|
 +|\cos\theta_A|
 +|\sin\theta_A|
 \Bigr)\\
&\qquad
 +\tfrac12\Bigl(
 |\alpha_1\cos\theta_A+\sin\theta_A|
 +|-\alpha_1\sin\theta_A+\cos\theta_A|
 \Bigr)\\[2pt]
&= \tfrac12\Bigl(
 |\alpha_1|\rho_A+\rho_A+\psi_A(\alpha_1)
 \Bigr)\\
&= \tfrac12\bigl(
 \psi_A(\alpha_1)+\rho_A(1+|\alpha_1|)
 \bigr)
 = K_A .
\end{align*}
The six absolute values assemble exactly into $\rho_A$, $|\alpha_1|\rho_A$ and
$\psi_A(\alpha_1)$. Each of these bounding boxes is a rectangle containing the
corresponding triangle, so Lemmas~\ref{lem:right} and~\ref{lem:tri} give
\begin{align}
  K_A &\ \ge\ 1 + |\alpha_1|, \label{eq:KA}\\
  K_C &\ \ge\ 1 + |\alpha_2|, \label{eq:KC}\\
  K_B &\ \ge\ 1 + C_\alpha - |\alpha_1|, \label{eq:KB1}\\
  K_B &\ \ge\ 1 + C_\alpha - |\alpha_2|. \label{eq:KB2}
\end{align}

\subsection*{Step 5: pairwise lower bounds}
Let $M = 2 + C_\alpha$. Then \eqref{eq:KA} $+$ \eqref{eq:KB1} and
\eqref{eq:KC} $+$ \eqref{eq:KB2} give $K_A + K_B \ge M$ and $K_B + K_C \ge M$; and
since $|\alpha_1| + |\alpha_2| \ge C_\alpha$, adding \eqref{eq:KA} and \eqref{eq:KC}
gives $K_A + K_C \ge M$ as well. Adding \eqref{eq:KC} to $K_A + K_B \ge M$,
\begin{equation}\label{eq:Ksum}
  K_\Sigma := K_A + K_B + K_C \; \ge \; M + 1 .
\end{equation}

\subsection*{Step 6: conclusion of the proof}
Put $t_i = 1 - s_i \ge 0$, so that $t_A + t_B + t_C = 3 - S < 3 - H$. From the pairwise bounds of Step~5, each $K_i \le K_\Sigma - M$ (subtract the bound on
the other two from $K_\Sigma$), and $K_\Sigma - M \ge 1 > 0$ by~\eqref{eq:Ksum}.
Hence
\begin{align*}
  \sum_i K_i s_i
    &= K_\Sigma - \sum_i K_i t_i \\
    &\ \ge\ K_\Sigma - (K_\Sigma - M)\sum_i t_i \\
    &\ >\ K_\Sigma - (K_\Sigma - M)(3-H) \\
    &\ =\ M + (H-2)(K_\Sigma - M) .
\end{align*}
Since $H - 2 \ge 0$ and $K_\Sigma - M \ge 1$, the last expression satisfies
\[
  M + (H-2)(K_\Sigma - M) \;\ge\; M + (H-2) \;=\; (2 + C_\alpha) + (H-2)
    \;=\; H + C_\alpha .
\]
Thus $\sum_i K_i s_i > H + C_\alpha$, contradicting~\eqref{eq:master} and completing
the proof.
\qed

\begin{remark}[Attainment]
For every $H \in [2,3]$ the value $H$ is attained: three squares of sides
$1$, $\tfrac{H-1}{2}$, $\tfrac{H-1}{2}$, stacked vertically, have total side length
$1 + (H-1) = H$.
\end{remark}

\begin{remark}[Sharpness of $H\ge2$]\label{rem:sharp}
The threshold is sharp throughout $1\le H<2$. For $1\le H\le\tfrac32$, the
three-square construction of sides $\tfrac12,\tfrac12,H-\tfrac12$ has total side
length $H+\tfrac12>H$. For $\tfrac32\le H<2$, a unit square together with two
squares of side $\tfrac12$, placed side by side above it, has total side length
$2>H$.
\end{remark}

\begin{remark}[Where each ingredient is used]
Convexity gives the separating lines; the square shape is used through the support
function $\psi$ and through the fact that the projection of a square has length at
least its side; the container is used through the wall inequalities. In the proof of
the Strip Theorem, the hypothesis $H\ge2$ is used only in Step~1, to force every
pairwise sum above $1$, and in Step~6, where $H-2\ge0$. Steps~2--5, including the
master inequality and the coefficient bounds, remain valid for every positive
container height once a common vertical line is available. No angle or slope is
restricted at any point.
\end{remark}

\section{Three squares in a \texorpdfstring{$1 \times x$}{1-by-x} rectangle}

Let $G_3(x)$ denote the maximum total side length of three
interior-disjoint squares, with arbitrary orientations, contained in a
$1\times x$ rectangle, where $x\ge1$.

\begin{theorem}\label{thm:G3}
For every $x\ge1$,
\[
  G_3(x)=
  \begin{cases}
    x+\tfrac12, & 1\le x\le\tfrac32,\\[2pt]
    2,           & \tfrac32\le x\le2,\\[2pt]
    x,           & 2\le x\le3,\\[2pt]
    3,           & x\ge3.
  \end{cases}
\]
\end{theorem}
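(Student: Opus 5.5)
The lower bounds come from explicit constructions, and the upper bounds split by range of $x$. For $x\ge3$, three unit squares stacked give $3$, and Step~1 of the proof of Theorem~\ref{thm:strip} shows every side is at most $1$, so $G_3(x)=3$. For $2\le x\le3$, the attainment remark gives a packing of total $x$, and Theorem~\ref{thm:strip} with $H=x$ gives the upper bound. For $\tfrac32\le x\le2$, the unit square with two half-squares beside it (Remark~\ref{rem:sharp}) gives $2$. The upper bound follows by monotonicity: a $1\times x$ rectangle sits inside a $1\times2$ rectangle, so $G_3(x)\le G_3(2)=2$. For $1\le x\le\tfrac32$, the construction with sides $\tfrac12,\tfrac12,x-\tfrac12$ gives $x+\tfrac12$: two half-squares stacked in a column of width $\tfrac12$, beside a square of side $x-\tfrac12\le1$. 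The real work is the upper bound $G_3(x)\le x+\tfrac12$ on this range.

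For that bound I would argue by contradiction, assuming $S=s_A+s_B+s_C>x+\tfrac12$, and shrink slightly as before. Orient the rectangle as $[0,1]\times[0,H]$ with $H=x$. Each $s_i\le1$, so every pairwise sum exceeds $H-\tfrac12\ge\tfrac12$. The Helly argument of Step~1 then needs pairwise horizontal overlap, which fails in general, so I would split into cases.

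\emph{Case (i): some square, say $C$, has horizontal projection disjoint from both others' projections, or more generally is separable from the other two by a vertical line.} Then the other two squares lie in a $w\times H$ sub-rectangle and $C$ in a $(1-w)\times H$ one. Here I would invoke the two-square theorem, which the abstract says is combined with the new lemma, in the form: two squares in a $p\times q$ rectangle have side sum at most the stated bound (for $q\le 2p$, I expect it is at most $\max(p,q)$). Together with $s_C\le 1-w$, this should give at most $H+\tfrac12$ by a short optimisation in $w$.

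\emph{Case (ii): the three projections pairwise overlap.} Then Steps~2--5 apply verbatim, since the remark notes they hold for any height. The master inequality \eqref{eq:master} gives $\sum K_is_i\le H+C_\alpha$, and Step~6 no longer works because $H<2$. I would therefore sharpen the coefficient bound for $K_B$ when $\alpha_1,\alpha_2$ have opposite signs, where $T(\alpha_1,\alpha_2)$ is a triangle $T_{p,q}$ after shearing and scaling. Lemma~\ref{lem:opposite} gives $K_B\ge C_\alpha+\tfrac12$ when $|\alpha_1|,|\alpha_2|\ge1$. Combining this with \eqref{eq:KA}, \eqref{eq:KC}, and the weights $s_i\le1$, a linear-programming style bound like that of Step~6 should yield $\sum K_is_i\ge$ the needed quantity once $S>H+\tfrac12$.

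The same-sign case and the small-slope regime ($|\alpha_j|<1$) would be handled by the existing bounds \eqref{eq:KB1}--\eqref{eq:KB2}. I expect the main obstacle to be the bookkeeping of this final case analysis in the slopes: matching the $+\tfrac12$ gain exactly to the deficit $2-H$, and checking that no slope region escapes both Lemma~\ref{lem:opposite} and the reduction to the two-square theorem.
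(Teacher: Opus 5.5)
The constructions and the upper bounds for $x\ge\tfrac32$ are correct and match the paper. The gap is in the range $1\le x\le\tfrac32$, and it comes from one missing step: you never apply the two-square theorem to every pair inside the whole container. Since $[0,1]\times[0,x]\subseteq[0,x]^2$, Theorem~\ref{thm:two} (rescaled) gives $s_i+s_j\le x$ for all $i\ne j$. Together with $S>x+\tfrac12$, this forces $s_k>\tfrac12$ for each $k$, so every pairwise sum exceeds $1$. The paper's proof rests on this.

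This fact makes the Helly argument of Step~1 go through directly, so no case split on the projections is needed. Your split is also not exhaustive. Suppose the projections of $A$ and $C$ are disjoint while that of $B$ meets both. Then no vertical line separates one square from the other two, so neither (i) nor (ii) applies. That case is closed precisely by $s_A+s_C\le1$ together with $s_A+s_B\le x$ and $s_B+s_C\le x$.

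The more serious consequence is in case (ii). With only $s_i\le1$ and $S>H+\tfrac12$, the coefficient bounds you list cannot contradict \eqref{eq:master}, in any slope regime. Two examples with $H=1$:
\begin{itemize}
\item Same signs: $\alpha_1=4$, $\alpha_2=2$. Here $C_\alpha=4$, and \eqref{eq:KA}--\eqref{eq:KB2} give only $K_A\ge5$, $K_B\ge3$, $K_C\ge3$. The weights $(s_A,s_B,s_C)=(\varepsilon,1,\tfrac12)$ satisfy your constraints yet give $\sum K_is_i=\tfrac{9}{2}+5\varepsilon<5=H+C_\alpha$. This refutes your claim that the same-sign case follows from \eqref{eq:KB1}--\eqref{eq:KB2}.
\item Opposite signs: $\alpha_1=-3$, $\alpha_2=1$. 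Lemma~\ref{lem:opposite} applies and gives $K_B\ge\tfrac{9}{2}$, with $K_A\ge4$, $K_C\ge2$, $C_\alpha=4$. Yet $(\tfrac12,\varepsilon,1)$ gives $4+\tfrac{9}{2}\varepsilon<5$.
\end{itemize}
So the obstacle is not matching the $+\tfrac12$ gain against $2-H$. It is the absence of any lower bound keeping each $s_i$ away from $0$.

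The paper writes $\sum K_is_i=S+\sum(K_i-1)s_i$ and proves $\sum(K_i-1)s_i\ge C_\alpha-\tfrac12$ by regrouping into pairwise sums, with $p=|\alpha_1|\ge q=|\alpha_2|$:
\begin{itemize}
\item equal signs: $(p-q)(s_A+s_B)+q(s_A+s_C)\ge p$;
\item opposite signs, $q\le1$: $(p-\tfrac{q}{2})(s_A+s_B)+\tfrac{q}{2}(s_A+s_C)+\tfrac{q}{2}(s_B+s_C)\ge p+\tfrac{q}{2}\ge p+q-\tfrac12$;
\item opposite signs, $q>1$: $p(s_A+s_B)+q(s_B+s_C)-\tfrac12 s_B>p+q-\tfrac12$.
\end{itemize}
Each step uses $s_i+s_j>1$.

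A minor point: $T(\alpha_1,\alpha_2)$ becomes $T_{p,q}$ by a reflection only. A shear would not be allowed, since it does not preserve semi-perimeters of enclosing rectangles.
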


\begin{proof}
The upper bounds for $x\ge\tfrac32$ are immediate from
Theorem~\ref{thm:strip}: for $\tfrac32\le x\le2$, enlarge the container to a
$1\times2$ rectangle; for $2\le x\le3$, apply the theorem directly; and for
$x\ge3$, each side is at most $1$.

It remains to prove
\[
  G_3(x)\le x+\tfrac12
  \qquad (1\le x\le\tfrac32).
\]
Suppose to the contrary that three squares have sides $s_A,s_B,s_C$ and
\[
  S:=s_A+s_B+s_C>x+\tfrac12.
\]
As in the proof of Theorem~\ref{thm:strip}, shrink the squares slightly while
preserving this strict inequality, so all separations may be taken strict.
Any two of the squares lie in an $x\times x$ square, because
$[0,1]\times[0,x]\subseteq[0,x]^2$. Theorem~\ref{thm:two}, after scaling,
therefore gives
\begin{equation}\label{eq:pair-x}
  s_i+s_j\le x
  \qquad (i\ne j).
\end{equation}
If, say, $s_A+s_B\le1$, then adding this inequality to the two relevant
instances of~\eqref{eq:pair-x} gives
\[
  2S=(s_A+s_B)+(s_A+s_C)+(s_B+s_C)\le1+2x,
\]
contrary to $S>x+\tfrac12$. Hence every pairwise sum is greater than $1$.

The horizontal projections consequently overlap pairwise in their interiors,
so they have a common abscissa. Order the squares along the corresponding
vertical line as $A$ (bottom), $B$ (middle), and $C$ (top). Steps~2--4 of the
proof of Theorem~\ref{thm:strip} apply verbatim with $H$ replaced by $x$.
Thus there are real numbers $\alpha_1,\alpha_2$ and coefficients
$K_A,K_B,K_C$ satisfying
\begin{equation}\label{eq:master-small}
  K_As_A+K_Bs_B+K_Cs_C\le x+C_\alpha,
\end{equation}
where
\[
  C_\alpha=\tfrac12\bigl(|\alpha_1|+|\alpha_1-\alpha_2|+|\alpha_2|\bigr),
\]
and~\eqref{eq:KA}--\eqref{eq:KB2} hold. Put
\[
  L_A=K_A-1,\qquad L_B=K_B-1,\qquad L_C=K_C-1.
\]
We claim that
\begin{equation}\label{eq:Lclaim}
  L_As_A+L_Bs_B+L_Cs_C\ge C_\alpha-\tfrac12.
\end{equation}

Set $p=|\alpha_1|$ and $q=|\alpha_2|$. The estimates below are symmetric in
$(p,s_A)$ and $(q,s_C)$, so assume $p\ge q$.

If $\alpha_1\alpha_2\ge0$, then $C_\alpha=p$, and
\eqref{eq:KA}--\eqref{eq:KB2} give
\[
  L_A\ge p,\qquad L_B\ge p-q,\qquad L_C\ge q.
\]
Consequently,
\[
  L_As_A+L_Bs_B+L_Cs_C
  \ge (p-q)(s_A+s_B)+q(s_A+s_C)\ge p=C_\alpha.
\]

Suppose next that $\alpha_1\alpha_2<0$. Then $C_\alpha=p+q$, and the same
coefficient bounds give
\[
  L_A\ge p,\qquad L_B\ge p,\qquad L_C\ge q.
\]
If $q\le1$, then
\begin{align*}
  L_As_A+L_Bs_B+L_Cs_C
  &\ge \left(p-\frac q2\right)(s_A+s_B)
      +\frac q2(s_A+s_C)+\frac q2(s_B+s_C)\\
  &\ge p+\frac q2
   \ge p+q-\frac12
   = C_\alpha-\frac12.
\end{align*}
If $q>1$, then after a reflection and, if necessary, interchanging $p$
and $q$, the triangle $T(\alpha_1,\alpha_2)$ associated with $K_B$ in
Step~4 is congruent to the triangle in Lemma~\ref{lem:opposite}. Hence
$K_B\ge C_\alpha+\tfrac12$, or $L_B\ge C_\alpha-\tfrac12$. Using also
$s_B\le1$ and the strict pairwise inequalities,
\begin{align*}
  L_As_A+L_Bs_B+L_Cs_C
  &\ge p s_A+\left(C_\alpha-\frac12\right)s_B+q s_C\\
  &=p(s_A+s_B)+q(s_B+s_C)-\frac12s_B\\
  &>p+q-\frac12s_B
   \ge C_\alpha-\frac12.
\end{align*}
This proves~\eqref{eq:Lclaim}. Therefore
\[
  K_As_A+K_Bs_B+K_Cs_C
  =S+L_As_A+L_Bs_B+L_Cs_C
  >x+C_\alpha,
\]
contradicting~\eqref{eq:master-small}.

The matching constructions are axis-parallel. For $1\le x\le\tfrac32$, use
sides $\tfrac12,\tfrac12,x-\tfrac12$; for
$\tfrac32\le x\le2$, use sides $1,\tfrac12,\tfrac12$; for
$2\le x\le3$, use sides $1,\tfrac{x-1}{2},\tfrac{x-1}{2}$; and for
$x\ge3$, stack three unit squares. These attain the four displayed values.
\end{proof}

This completes the $n=3$ case of the rectangular square-packing question
posed by Richard Stanley in a 2021 comment on~\cite{MO396776}.

By scaling and, if necessary, interchanging the coordinate axes,
Theorem~\ref{thm:G3} determines the maximum total side length of three
squares in every rectangle.

\begin{corollary}[Rescaled strip form]\label{rem:strip}
Let $F_3(u)$ denote the maximum total side length of three
interior-disjoint squares in $[0,u]\times[0,1]$, where $0<u\le1$.
Then
\[
  F_3(u)=u\,G_3(1/u)=
  \begin{cases}
    3u,             & 0<u\le\tfrac13,\\[2pt]
    1,              & \tfrac13\le u\le\tfrac12,\\[2pt]
    2u,             & \tfrac12\le u\le\tfrac23,\\[2pt]
    1+\dfrac{u}{2}, & \tfrac23\le u\le1.
  \end{cases}
\]
In particular, $F_3(u)\le2u$ for every $\tfrac12\le u\le1$.
\end{corollary}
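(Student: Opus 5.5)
The plan is to reduce the corollary to Theorem~\ref{thm:G3} by scaling, and then substitute piecewise. A packing of three squares in $[0,u]\times[0,1]$ with $0<u\le1$ becomes, after multiplying all coordinates by $1/u$, a packing in $[0,1]\times[0,1/u]$. Every side length is multiplied by $1/u$, and interior-disjointness and containment are preserved. The inverse scaling maps packings back, so the two families of packings are in bijection.

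Since $[0,1]\times[0,1/u]$ is a $1\times(1/u)$ rectangle with $1/u\ge1$, this bijection gives $F_3(u)=u\,G_3(1/u)$ exactly. Here I use that $G_3$ is defined for rectangles up to rigid motion, so the orientation of the $1\times x$ rectangle is irrelevant. The maximum is attained on both sides, which is clear from the explicit constructions in Theorem~\ref{thm:G3} together with the upper bounds. Strictly speaking, we only need that the suprema correspond, and the scaling gives that.

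Next I substitute $x=1/u$ into the four branches of Theorem~\ref{thm:G3}. The ranges correspond as follows:
\begin{itemize}
\item $x\ge3$ is $u\le\tfrac13$, which gives $3u$;
\item $2\le x\le3$ is $\tfrac13\le u\le\tfrac12$, which gives $u\cdot\tfrac1u=1$;
\item $\tfrac32\le x\le2$ is $\tfrac12\le u\le\tfrac23$, which gives $2u$;
\item $1\le x\le\tfrac32$ is $\tfrac23\le u\le1$, which gives $u(\tfrac1u+\tfrac12)=1+\tfrac u2$.
\end{itemize}
At the breakpoints the adjacent formulas agree: $1$ at $u=\tfrac13$, $1$ at $u=\tfrac12$, and $\tfrac43$ at $u=\tfrac23$. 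This matches the consistency of Theorem~\ref{thm:G3} at its own breakpoints.

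For the final inequality $F_3(u)\le 2u$ on $\tfrac12\le u\le1$, I split the interval. On $[\tfrac12,\tfrac23]$ the bound holds with equality. On $[\tfrac23,1]$ we have $1+\tfrac u2\le 2u$ if and only if $u\ge\tfrac23$, so it also holds there.

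No step is a real obstacle. The only point needing care is the bijection under scaling, which justifies the exact identity $F_3(u)=u\,G_3(1/u)$ rather than just an inequality.
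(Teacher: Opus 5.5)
Your proposal is correct and is essentially the paper's argument. Rescaling $[0,u]\times[0,1]$ by $1/u$ gives the $1\times(1/u)$ container of Theorem~\ref{thm:G3}, so $F_3(u)=u\,G_3(1/u)$; substituting $x=1/u$ branch by branch, and checking $1+\tfrac u2\le 2u$ for $u\ge\tfrac23$, gives both the formula and the final bound.
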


\section{Axis-parallel guillotine cuts in five-square packings}

\begin{corollary}\label{cor:guillotine}
Let five interior-disjoint squares in the unit square have total side length
$S_5$. Suppose that the packing admits an axis-parallel guillotine cut,
namely, an axis-parallel line that avoids the interiors of all five squares
and separates them into two nonempty subfamilies. Then $S_5\le2$.
\end{corollary}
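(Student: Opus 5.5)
The plan is to cut along the guillotine line, bound the total side length on each side separately with the strip results already available, and add. By the symmetry of the unit square under rotation by $\pi/2$, we may assume the cut is the vertical line $x=w$. It splits the packing into a family of $k$ squares contained in $[0,w]\times[0,1]$ and a family of $5-k$ squares contained in $[w,1]\times[0,1]$, with $1\le k\le4$. Swapping the two sides if necessary, it suffices to treat the splits $\{1,4\}$ and $\{2,3\}$.

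We will use four strip bounds, each for a strip of width $u\le1$ and height $1$.
\begin{itemize}
\item \emph{Area bound.} $k$ squares in a $u\times1$ strip have total side length at most $\sqrt{ku}$. Indeed, interior-disjointness gives $\sum s_i^2\le u$ whatever the rotations, and Cauchy--Schwarz then gives $\sum s_i\le\sqrt{k\sum s_i^2}$.
\item \emph{One square.} Its side is at most $u$, since the horizontal projection has length $s\rho\ge s$.
\item \emph{Two squares.} The total is at most $\min(2u,1)$. The bound $2u$ follows from the one-square bound. The bound $1$ comes from the two-square theorem (Theorem~\ref{thm:two}), since the strip lies inside the unit square.
\item \emph{Three squares.} The total is at most $F_3(u)$, by Corollary~\ref{rem:strip}.
\end{itemize}

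For the $\{1,4\}$ split, let $v$ be the width of the side containing the single square. That square contributes at most $v$. By the area bound the four squares contribute at most $2\sqrt{1-v}$. By AM--GM, $2\sqrt{1-v}\le 1+(1-v)=2-v$, so $S_5\le v+(2-v)=2$.

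For the $\{2,3\}$ split, let $u$ be the width of the side holding three squares, so $S_5\le F_3(u)+\min(2(1-u),1)$. We check this piecewise using the formula for $F_3$.
\begin{itemize}
\item For $u\le\tfrac13$: $3u+1\le2$.
\item For $\tfrac13\le u\le\tfrac12$: $1+1=2$.
\item For $\tfrac12\le u\le\tfrac23$: $2u+2(1-u)=2$.
\item For $\tfrac23\le u\le1$: $1+\tfrac u2+2(1-u)=3-\tfrac{3u}{2}\le2$.
\end{itemize}
This gives $S_5\le2$ in every case.

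The only step needing real care is making sure each side is handled by the right earlier result. In particular, the two-square side with width $\ge\tfrac12$ needs Theorem~\ref{thm:two} rather than the area bound, because the area bound would only give $\sqrt{2(1-u)}$, which is too weak there. Degenerate widths ($w=0$ or $w=1$) cannot occur, since both subfamilies are nonempty and every square has positive side. The constant $2$ is attained, for instance, by one square of side $\tfrac12$ beside four of side $\tfrac14$... or more simply by four squares of side $\tfrac12$ plus a degenerate one, which is consistent with the bound.
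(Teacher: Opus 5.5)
Your proof is correct and follows the paper's argument essentially step for step: the same area/Cauchy--Schwarz bound in the $1+4$ case, and the same piecewise check in the $2+3$ case using $\min\bigl(1,2(1-u)\bigr)$ and Corollary~\ref{rem:strip}. The only differences are minor. On $[\tfrac23,1]$ you use the exact value $1+\tfrac u2$, which rests on the harder small-$x$ case of Theorem~\ref{thm:G3}, whereas the paper uses only $F_3(u)\le 2u$ for all $u\ge\tfrac12$, which already follows from the Strip Theorem by enlarging the container to height $2$. Also, your closing attainment remark (not needed for the statement) is off, since one square of side $\tfrac12$ with four of side $\tfrac14$ totals $\tfrac32$; sides $\tfrac12,\tfrac12,\tfrac12,\tfrac14,\tfrac14$ with the cut at $x=\tfrac12$ do give $2$.
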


\begin{proof}
After rotating the entire configuration through a right angle if necessary, we may
assume that $\ell$ is vertical, say $\ell = \{x = t\}$. Reflecting in the vertical
midline if necessary, we may further assume that $k$ squares lie on the left and $5-k$
on the right, with $k \in \{1,2\}$.

\smallskip
\noindent\textbf{Case $1+4$.}
The single left square, of side $a$, lies in $[0,t] \times [0,1]$ and its horizontal
projection has length at least $a$, so $t \ge a$. The other four squares lie in the
right strip $[t,1] \times [0,1]$, of area $1 - t \le 1 - a$, and have disjoint
interiors, so their areas satisfy
\[
  \sum_{i=1}^4 b_i^2 \; \le \; 1 - a .
\]
By Cauchy--Schwarz, $\bigl(\sum b_i\bigr)^2 \le 4 \sum b_i^2$, hence
$\sum b_i \le 2\sqrt{\sum b_i^2} \le 2\sqrt{1-a}$ and
\[
  S_5 \; \le \; a + 2\sqrt{1-a} .
\]
Since $a \le 1$ we have $2 - a > 0$, so squaring is legitimate and
\[
  a + 2\sqrt{1-a} \le 2
  \iff 2\sqrt{1-a} \le 2 - a
  \iff 4(1-a) \le (2-a)^2
  \iff 0 \le a^2 ,
\]
which holds. Hence $S_5 \le 2$.

\smallskip
\noindent\textbf{Case $2+3$.}
Let the right strip have width $u$, so the left one has width $1 - u$. The two left
squares contribute at most $\min\bigl(1,\ 2(1-u)\bigr)$: each side is at most the strip
width, and the two-square theorem (Theorem~\ref{thm:two}) in the ambient unit square
caps the pair at~$1$. The three right squares, in a strip $u \times 1$, contribute at
most $3u$ for $u \le \tfrac13$, at most $1$ for $\tfrac13 \le u \le \tfrac12$, and at
most $2u$ for $u \ge \tfrac12$, by Corollary~\ref{rem:strip}. Then
\begin{align*}
  u \le \tfrac13: &\quad S_5 \le 1 + 3u \le 2; \\
  \tfrac13 \le u \le \tfrac12: &\quad S_5 \le 1 + 1 = 2; \\
  u \ge \tfrac12: &\quad S_5 \le 2(1-u) + 2u = 2. \qedhere
\end{align*}
\end{proof}

\subsection*{Consequence for Erd\H{o}s problem~\#106}
Consequently, any five-square packing in the unit square with total side
length greater than $2$ admits no axis-parallel guillotine cut. The equality
$f(5)=2$ is attributed to Newman through a personal communication to
Erd\H{o}s~\cite{erdos106}; we are not aware of a published proof.

\appendix

\section{The two-square theorem and small values}

\begin{theorem}\label{thm:two}
Two squares with disjoint interiors in the unit square, with sides $a$ and
$b$, satisfy
\[
a+b\le1.
\]
\end{theorem}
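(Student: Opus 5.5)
We prove Theorem~\ref{thm:two} with the same separating-line machinery as in Section~\ref{sec:proof}, now with only one separating line and a square container. Suppose $a+b>1$ and shrink both squares slightly, as before, so that $a+b>1$ still holds and all separations are strict.

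\textbf{Step 1: the separating line is not vertical.} If the horizontal projections of the two squares had disjoint interiors, their widths $a\rho_A+b\rho_B\ge a+b$ would have to fit in $[0,1]$. That would give $a+b\le1$, so the horizontal projections overlap. The same argument applied to vertical projections shows they overlap too. Hence a separating line is neither vertical nor horizontal. After a reflection, write its normal as $(\alpha,1)$ with $\alpha\neq0$, with $A$ below and $B$ above.

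\textbf{Step 2: the analogue of \eqref{eq:sep1}.} As in Step~2 of Section~\ref{sec:proof}, separation gives
\[
y_B-y_A\ge \tfrac a2\psi_A(\alpha)+\tfrac b2\psi_B(\alpha)-\alpha(x_B-x_A).
\]

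\textbf{Step 3: use the walls.} The vertical wall bounds give $y_B-y_A\le 1-\tfrac12(w_A+w_B)$. For the horizontal terms, apply $\gamma x_i\le\gamma_+-\tfrac12|\gamma|w_i$ with coefficients $\alpha$ and $-\alpha$, whose positive parts sum to $|\alpha|$. This yields
\[
\tfrac12\bigl(\psi_A(\alpha)+\rho_A(1+|\alpha|)\bigr)a+\tfrac12\bigl(\psi_B(\alpha)+\rho_B(1+|\alpha|)\bigr)b\le 1+|\alpha|.
\]

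\textbf{Step 4: bound the coefficients.} By the computation in Step~4 of Section~\ref{sec:proof}, each coefficient is the semi-perimeter of a bounding box of the right triangle $\Delta_\alpha$, whose legs are $|\alpha|$ and $1$. Lemma~\ref{lem:right} therefore shows each coefficient is at least $1+|\alpha|$. We get $(1+|\alpha|)(a+b)\le1+|\alpha|$, so $a+b\le1$, a contradiction.

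\textbf{Main obstacle.} The delicate point is that this is too easy. The vertical wall bound uses the full height $1$, while the horizontal bound charges both $w_A$ and $w_B$ against a single unit width, so I must check that no wall term is double counted. The step relies on $x_A,x_B$ having opposite coefficients in $X$, exactly as in Step~3 of Section~\ref{sec:proof} with two squares instead of three. That step remains valid for an arbitrary container height, as the remark on ingredients notes, so I expect the argument to go through.

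\textbf{Fallback.} If this bookkeeping fails, I would instead argue with areas: $a^2+b^2\le1$, combined with the fact that the two projections must fit in a common unit interval along some direction.
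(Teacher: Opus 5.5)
Your argument is correct, and your worry about double counting is unfounded. The four wall inequalities you use ($y_A\ge w_A/2$, $y_B\le 1-w_B/2$, and one horizontal bound each for $x_A$ and $x_B$) constrain four different coordinates, so the proof is simply the $n=2$ case of Steps~2--4, and the area fallback is not needed.

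The paper's proof has the same skeleton: one separating line, with the sign of the normal fixed by reflecting the container (your positive-part bookkeeping with $\pm\alpha$ replaces those reflections). It obtains the per-square estimate differently, though. Lemma~\ref{lem:corner} exhibits an explicit point $z$ of the square with $z_x,z_y\ge s$, via the AM--GM inequality $\rho+1/\rho\ge2$, and evaluates the separating functional at that point.

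Your coefficient bound $\tfrac12\bigl(\psi(\alpha)+\rho(1+|\alpha|)\bigr)\ge1+|\alpha|$ is the support-function (dual) form of the same fact. For $\alpha\ge0$, $s$ times the left side is the maximum of $\alpha z_x+z_y$ over the square pushed into the corner of the first quadrant, so Lemma~\ref{lem:corner} gives your bound at once.

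The paper's route is shorter and independent of Fact~\ref{fact:flush} and Lemma~\ref{lem:right}, which lets the appendix stand as a self-contained reconstruction. Yours needs that machinery but buys uniformity: it shows the two-square theorem is the one-separating-line instance of the master inequality, with no new lemma. Your Step~1 claims about vertical projections and $\alpha\ne0$, and the initial shrinking, are harmless but unused; only non-verticality of the separating line matters.
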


The result is attributed to Erd\H{o}s in an early article in a Hungarian
journal for secondary-school students; see the history notes
at~\cite{erdos106}. We give the following self-contained reconstruction.

\begin{lemma}[corner depth]\label{lem:corner}
A square $Q$ of side $s$ contained in the first quadrant contains a point $z$ with
$z_x \ge s$ and $z_y \ge s$; if $Q$ is not axis-parallel, both inequalities are strict.
\end{lemma}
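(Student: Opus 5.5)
The plan is to work in coordinates with the quadrant $\{x\ge0,\ y\ge0\}$ and to take $z$ to be a suitable vertex of $Q$, or the intersection point of two of its diagonals' supporting lines. Write the square with orientation $\theta$, reduced modulo $\pi/2$ so that $\theta\in[0,\pi/2)$. The edge directions are then $u=(\cos\theta,\sin\theta)$ and $v=(-\sin\theta,\cos\theta)$. The four vertices are $P_0$, $P_0+su$, $P_0+sv$ and $P_0+su+sv$, where $P_0$ is the vertex minimizing $y$; with this choice of $\theta$, $P_0$ is also the vertex minimizing $x+y\tan\theta$, and so on.

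The candidate point is the vertex $z=P_0+su+sv$, opposite $P_0$. Its coordinates are
\[
  z_x=(P_0)_x+s(\cos\theta-\sin\theta),\qquad z_y=(P_0)_y+s(\sin\theta+\cos\theta).
\]
This naive choice fails when $\theta$ is near $\pi/4$, so I would not commit to a single vertex. Instead I would use containment in the quadrant directly:
\begin{itemize}
  \item the leftmost vertex $P_0+sv$ has $x$-coordinate $(P_0)_x-s\sin\theta\ge0$;
  \item the lowest vertex $P_0$ has $y$-coordinate $(P_0)_y\ge0$.
\end{itemize}
Then $z_x\ge s\sin\theta+s\cos\theta-s\sin\theta=s\cos\theta$, which is not enough on its own. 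It is better to take $z$ to be the vertex farthest in the direction $(1,1)$. Its coordinates are bounded below by the $x$-extent and $y$-extent contributions, so $z_x\ge$ (leftmost $x$) plus the horizontal distance from the leftmost vertex to that vertex.

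The cleaner route is this. The rightmost vertex $R$ has $R_x\ge w=s\rho\ge s$, since the horizontal projection has length $w$ and starts at $x\ge0$. Likewise the topmost vertex $T$ has $T_y\ge w$. For $\theta\in(0,\pi/2)$, $R$ and $T$ are adjacent vertices, joined by the edge from $R$ to $T$, which runs up and to the left. I would choose $z$ on the segment $RT$, parametrized as $z(\lambda)=R+\lambda(T-R)$. Its coordinates are linear in $\lambda$, with $z_x$ decreasing from $R_x\ge s\rho$ to $T_x$, and $z_y$ increasing from $R_y$ to $T_y\ge s\rho$.

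A point on the edge satisfying both $z_x\ge s$ and $z_y\ge s$ exists if the two intervals of admissible $\lambda$ overlap. This reduces to one inequality: the excesses $R_x-s$ and $T_y-s$, measured along the edge, are large enough to cover the edge. Concretely, the edge has horizontal run $s\sin\theta$ and vertical rise $s\cos\theta$ (or vice versa). We need
\[
  \frac{R_x-s}{s\sin\theta}+\frac{T_y-s}{s\cos\theta}\ \ge\ 1,
\]
and it suffices that $(\rho-1)\bigl(\tfrac1{\sin\theta}+\tfrac1{\cos\theta}\bigr)\ge1$. Since $\rho-1=\sin\theta+\cos\theta-1$, this becomes
\[
  (\sin\theta+\cos\theta-1)(\sin\theta+\cos\theta)\ \ge\ \sin\theta\cos\theta .
\]
Setting $t=\sin\theta+\cos\theta$, so that $\sin\theta\cos\theta=(t^2-1)/2$, this is $t^2-t\ge (t^2-1)/2$, that is, $(t-1)^2\ge0$. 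The inequality is strict when $t>1$, which is exactly the non-axis-parallel case. In the strict case both coordinate inequalities can then be made strict, by choosing $\lambda$ in the interior of the overlap.

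The axis-parallel case, $\theta=0$, is immediate: take the vertex farthest from the origin. The main obstacle is the bookkeeping in identifying which vertices are $R$ and $T$ and the signs of the run and rise for the reduced $\theta$. I expect this to be routine once $\theta\in(0,\pi/2)$ is fixed.
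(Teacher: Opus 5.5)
Your final argument (the ``cleaner route'') is correct; it differs from the paper's proof mainly in how the point $z$ is produced. The paper reduces $\theta$ to $[-\pi/4,\pi/4]$ and writes down the explicit witness $z=c+\tfrac{s}{2\rho}(1,1)$. It checks $z\in Q$ via $|(z-c)\cdot u|,|(z-c)\cdot v|\le s/2$. It then gets $z_x,z_y\ge \tfrac{s}{2}(\rho+\tfrac1\rho)\ge s$ from $c_x,c_y\ge s\rho/2$ and AM--GM, with strictness automatic when $\rho>1$.

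You instead prove that a suitable point exists on the edge $RT$, using $R_x,T_y\ge s\rho$ and an interval-overlap criterion. That criterion reduces to $(t-1)^2\ge0$ with $t=\rho$, which is the same inequality as $\rho+1/\rho\ge2$. The two proofs find points on the same edge. Since $(z-c)\cdot u=\tfrac{s}{2\rho}(\cos\theta+\sin\theta)=s/2$, the paper's witness is your $z(\lambda)$ with $\lambda=\cos\theta/(\cos\theta+\sin\theta)$. Substituting this $\lambda$ gives $z_x\ge s\rho-s\sin\theta\cos\theta/\rho=\tfrac{s}{2}(\rho+1/\rho)$, exactly the paper's bound.

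The explicit witness buys a uniform treatment. It needs no identification of $R$ and $T$, no separate axis-parallel case, and gives the strict case for free. Your version explains where such a point comes from, at the cost of that bookkeeping.

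Two small points for a write-up:
\begin{itemize}
\item \emph{Strictness.} The strict inequality $\tfrac{R_x-s}{s\sin\theta}+\tfrac{T_y-s}{s\cos\theta}>1$ alone does not make the admissible set of $\lambda\in[0,1]$ have nonempty interior. You also need $R_x>s$ and $T_y>s$, so that the constraints $\lambda\le (R_x-s)/(s\sin\theta)$ and $\lambda\ge 1-(T_y-s)/(s\cos\theta)$ are not pinned to an endpoint of $[0,1]$. Both follow at once from $\rho>1$, but say so.
\item \emph{False starts.} The opening paragraphs should be cut: the opposite vertex, the vertex farthest in direction $(1,1)$, and the remark about minimizing $x+y\tan\theta$ (where $P_0$ actually ties with $P_0+sv$) are all abandoned attempts.
\end{itemize}
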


\begin{proof}
Reducing the orientation modulo $\pi/2$, take $\theta \in [-\tfrac\pi4, \tfrac\pi4]$ and
$\rho = \cos\theta + |\sin\theta| \in [1,\sqrt2]$. The horizontal and vertical
half-widths of $Q$ are $s\rho/2$, so its center $c$ satisfies
$c_x, c_y \ge s\rho/2$. Put $z = c + \tfrac{s}{2\rho}(1,1)$. Then
\[
  |(z-c)\cdot u| = \tfrac{s}{2\rho}|\cos\theta + \sin\theta| \le \tfrac{s}{2},
  \qquad
  |(z-c)\cdot v| = \tfrac{s}{2\rho}|\cos\theta - \sin\theta| \le \tfrac{s}{2},
\]
so $z \in Q$; and
$z_x, z_y \ge \tfrac{s}{2}\bigl(\rho + \tfrac1\rho\bigr) \ge s$
by AM--GM, strictly if $\rho > 1$.
\end{proof}

\begin{proof}[Proof of Theorem~\ref{thm:two}]
The interiors are disjoint convex sets, so a line $px + qy = t$ separates the squares;
composing with the reflections $x \mapsto 1-x$ and $y \mapsto 1-y$ of the container we
may take $p, q \ge 0$ with $p + q > 0$, and the $a$-square in $px + qy \le t$.
Lemma~\ref{lem:corner} gives a point $z$ of the $a$-square with $z_x, z_y \ge a$, whence
$t \ge p z_x + q z_y \ge a(p+q)$. Reflecting the $b$-square through the center of the
container and applying the lemma gives a point $z'$ of the $b$-square with
$z'_x, z'_y \le 1 - b$, whence $t \le (1-b)(p+q)$. Dividing by $p + q$ gives
$a + b \le 1$.
\end{proof}

\begin{corollary}\label{cor:small}
$f(3) = \tfrac32$ and $f(4) = 2$.
\end{corollary}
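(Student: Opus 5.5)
The claim is $f(3)=\tfrac32$ and $f(4)=2$. For each value I will give a construction for the lower bound and then prove the upper bound using results already in the paper.

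\emph{Lower bounds.} For $f(4)\ge2$, tile the unit square by four axis-parallel squares of side $\tfrac12$. For $f(3)\ge\tfrac32$, take three of those four quarter squares; this gives only $\tfrac32$, which is exactly what is needed. It also matches the Erd\H{o}s--Soifer formula $f(k^2+2c+1)\ge k+c/k$ with $k=1$, $c=\ldots$; the formula is not needed here, since the direct construction suffices.

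\emph{Upper bound for $f(3)$.} Let three interior-disjoint squares with sides $s_A,s_B,s_C$ lie in the unit square. Any two of them form a two-square packing of the unit square, so Theorem~\ref{thm:two} gives
\[
s_A+s_B\le1,\qquad s_A+s_C\le1,\qquad s_B+s_C\le1 .
\]
Adding the three inequalities gives $2(s_A+s_B+s_C)\le3$, hence $f(3)\le\tfrac32$. The same conclusion also follows from Theorem~\ref{thm:G3} with $x=1$, since $G_3(1)=1+\tfrac12$.

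\emph{Upper bound for $f(4)$.} Split the four squares into the two disjoint pairs $\{A,B\}$ and $\{C,D\}$. Applying Theorem~\ref{thm:two} to each pair gives $s_A+s_B\le1$ and $s_C+s_D\le1$, so the total is at most $2$. Alternatively, Cauchy--Schwarz with the area bound $\sum s_i^2\le1$ gives $\sum s_i\le2\sqrt{\sum s_i^2}\le2$.

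Neither step is a real obstacle. The only point that needs checking is that Theorem~\ref{thm:two} applies to any two squares of the packing, which holds because any subfamily of an interior-disjoint packing is still an interior-disjoint packing in the same unit square.
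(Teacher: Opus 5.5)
Your proof is correct. The $f(3)$ part, including the L-shaped construction, is exactly the paper's argument: apply Theorem~\ref{thm:two} to each of the three pairs and add.

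For $f(4)$ your main route is different. You split the four squares into two pairs and apply Theorem~\ref{thm:two} twice. The paper instead uses only the area bound $\sum s_i^2\le1$ with Cauchy--Schwarz, which you also give as an alternative. The paper's route is more elementary, since it needs nothing beyond area and does not depend on the two-square theorem. Your pairing route shows that the two-square theorem alone settles every case $2\le n\le4$. Averaging over pairs gives $f(n)\le n/2$ for $n\ge2$, and this is at most the area bound $\sqrt n$ exactly when $n\le4$, with equality at $n=4$.

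One small correction to your aside: the Erd\H{o}s--Soifer parameters that give $f(3)\ge\tfrac32$ are $k=2$, $c=-1$, so that $k^2+2c+1=3$ and $k+c/k=\tfrac32$; they are not $k=1$. As you say, the formula is not needed.
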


\begin{proof}
For three squares of sides $a, b, c$, summing $a + b \le 1$ over the three pairs gives
$2(a+b+c) \le 3$; three squares of side $\tfrac12$ in an L attain it. For $f(4)$,
Cauchy--Schwarz gives $\sum s_i \le 2\sqrt{\sum s_i^2} \le 2$, attained by the
$2\times2$ grid.
\end{proof}


\begin{thebibliography}{99}

\bibitem{BKU}
J.~Baek, J.~Koizumi and T.~Ueoro,
\emph{A note on the Erd\H{o}s conjecture about square packing},
preprint, \texttt{arXiv:2411.07274} (2024).

\bibitem{CS}
C.~Campbell and W.~Staton,
\emph{A square-packing problem of Erd\H{o}s},
Amer. Math. Monthly \textbf{112} (2005), no.~2, 165--167.

\bibitem{ES}
P.~Erd\H{o}s and A.~Soifer,
\emph{Squares in a square},
Geombinatorics \textbf{4} (1995), no.~4, 110--114.

\bibitem{erdos106}
T.~F. Bloom,
\emph{Erd\H{o}s Problem \#106},
\url{https://www.erdosproblems.com/106}, accessed 2026-08-25.

\bibitem{MO396776}
\emph{Three squares in a rectangle},
MathOverflow question 396776, asked 4 July 2021,
\url{https://mathoverflow.net/questions/396776}.

\bibitem{Praton}
I.~Praton,
\emph{Packing squares in a square},
Math. Mag. \textbf{81} (2008), no.~5, 358--361.

\bibitem{Singh}
A.~R. Singh,
\emph{On a square packing conjecture of Erd\H{o}s},
preprint, \texttt{arXiv:2601.22163} (2026).

\bibitem{ST}
W.~Staton and B.~Tyler,
\emph{On the Erd\H{o}s square-packing conjecture},
Geombinatorics \textbf{17} (2007), no.~2, 88--94.

\bibitem{Toussaint83}
G.~T. Toussaint,
\emph{Solving geometric problems with the rotating calipers},
Proc. IEEE MELECON~'83, Athens, Greece, May 1983.

\end{thebibliography}
\end{document}